\begin{document}
% Alpha labelled theorems
\newtheorem{thmA}{Theorem}
\renewcommand{\thethmA}{\Alph{thmA}}
\newtheorem{corA}[thmA]{Corollary}

% theorems with special labels
%\newtheorem{thmspec}{\relax}
%% add a line like the following before each use
%\renewcommand{\thethmspec}{1-2-3 Theorem \kern-.3em}

\newtheorem{theorem}{Theorem}[section]
\newtheorem{thm}[theorem]{Theorem}
\newtheorem{lemma}[theorem]{Lemma}
\newtheorem{cor}[theorem]{Corollary}
\newtheorem{prop}[theorem]{Proposition}
\newtheorem{addendum}[theorem]{Addendum}
\newtheorem{claim}[theorem]{Claim}
\newtheorem{question}[theorem]{Question}

% theorems with special labels
\newtheorem{thmspec}{\relax}

\theoremstyle{definition}
\newtheorem{definition}[theorem]{Definition}
\newtheorem{example}[theorem]{Example}

\theoremstyle{remark}
\newtheorem{remark}[theorem]{Remark}
\newtheorem{remarks}[theorem]{Remarks}

\newenvironment{thmbis}[1]
  {\renewcommand{\thethmA}{\ref{#1}$'$}%
   \addtocounter{thmA}{-1}%
   \begin{thmA}}
  {\end{thmA}}

\numberwithin{equation}{section}

\hoffset -1cm
\voffset 1cm
\textheight 21cm \textwidth 14cm

\def\E{\Lambda}

\def\eg{A}
\def\er{R}
\def\P{\Pi}
\def\<{\langle}
\def\>{\rangle}
\def\A{\mathcal A}
\def\R{\mathcal R}
\def\Y{\mathcal Y}
\def\H{{\rm{H}}}

\def\th{\theta}
\def\Th{\Theta}

\def\C{\mathcal C}

\def\-{\overline}

\def\La{\Lambda}

\def\cech{\check}

\def\T{T}
\def\U{U}

\def\a{\alpha}
\def\b{\beta}
\def\c{\gamma}
\def\d{\delta}

\def\X{\mathcal X}

\def\inv{^{-1}}
\def\-{\underline}
\def\Z{\mathbb{Z}}
\def\N{\mathbb{N}}
\def\G{\mathcal G}
\def\S{\Sigma}
\def\e{\varepsilon}
\def\ln{\<\!\<}
\def\rn{\>\!\>}
\def\AR{\langle A\mid R\rangle}

\newcommand{\id}{\rm{id}}

\def\C{\mathcal{C}}

% this gets rid of the AMS series logo and copyright
\catcode`\@=11
\def\serieslogo@{\relax}
\def\@setcopyright{\relax}
\catcode`\@=12

% this gets rid of the AMS series logo and copyright
\catcode`\@=11
\def\serieslogo@{\relax}
\def\@setcopyright{\relax}
\catcode`\@=12

\hoffset -1cm
\voffset 1cm
\textheight 21cm \textwidth 14cm

% author  information
\author[Owen Cotton-Barratt]{Owen Cotton-Barratt}
\address{Mathematical Institute, 24-29 St Giles', Oxford OX1 3LB, UK}
\email{cotton-b@maths.ox.ac.uk}

\title{Detecting ends of residually finite groups in profinite completions}

\date{March 2012}

\begin{abstract}
%VERSION 1, for ArXiV 

Let $\C$ be a variety of finite groups. We use profinite Bass--Serre theory to show that if $u:H\hookrightarrow G$ is a map of finitely generated residually $\C$ groups such that the induced map $\hat{u}:\hat{H}\rightarrow\hat{G}$ is a surjection of the pro-$\C$ completions, and $G$ has more than one end, then $H$ has the same number of ends as $G$. However if $G$ has one end the number of ends of $H$ may be larger; we observe cases where this occurs for $\C$ the class of finite $p$-groups.

%PART ON UNKNOWN PATHOLOGIES

We produce a monomorphism of groups $u:H\hookrightarrow G$ such that: either $G$ is hyperbolic but not residually finite; or $\hat{u}:\hat{H}\rightarrow\hat{G}$ is an isomorphism of profinite completions but $H$ has property (T) (and hence (FA)), but $G$ has neither. Either possibility would give new examples of pathological finitely generated groups.
%VERSION 2, for Journal

%We show that when $u:H \hookrightarrow G$ is a map of finitely generated, residually $\mathcal{C}$ groups such that the induced map $\hat{u}:\hat{H}\rightarrow\hat{G}$ is an isomorphism of the pro-$\mathcal{C}$ completions, if $G$ has more than one end, $H$ must also. We construct an example to show that in general the other direction does not hold: $H$ may have more than one end while $G$ is one-ended.
\end{abstract}

\maketitle

\section{Introduction}
A property of finitely generated, residually finite groups is said to be profinite if it is preserved among groups with the same profinite completion. Understanding which properties are profinite amounts to understanding what the profinite completion of a residually finite group tells you about the group, and there has been a lot of recent work in the area, \emph{e.g.} \cite{Lackenby}, \cite{cotton2009conjugacy}, \cite{menny2010profinite}.

%Lackenby showed that among finitely presented groups \emph{largeness} (virtually surjecting onto a nonabelian free group) is profinite \cite{Lackenby}; Wilton and I showed that conjugacy separability is not even weakly profinite \cite{cotton2009conjugacy}; Menny Aka shows that Kazhdan's property (T) is not profinite \cite{menny2010profinite}.

The number of ends of a finitely generated group is always 0, 1, 2 or $\infty$, and this provides an important division in classifying finitely generated groups. While 0-ended obviously coincides with finite and 2-ended is easily seen to be virtually cyclic, a theorem of Stallings precisely describes those groups with infinitely many ends \cite{Stallings}. The theorem is proved by invoking the theory of group actions on trees (see for example \cite{Serre} for an introduction to this subject).

Analogous to the discrete case and the theory of groups acting on trees, there is a theory of profinite groups acting (continuously) on profinite trees. The groundwork of this theory has been developed, principally by Ribes and Zalesskii \cite{RZ}, \cite{profinitetrees}.

We use this profinite Bass--Serre theory to prove (Theorem \ref{t: profinite ends}, below) that when an injection $u: H \hookrightarrow G$ of finitely generated, residually finite groups induces a surjection of profinite completions, then a splitting of $G$ over a finite subgroup can be used to construct such a splitting of $H$, and hence if $G$ has more than one end then $H$ must also. Thus having infinitely many ends is a \emph{down-weak profinite property}. This argument is general in that it extends to the case where the groups are residually $\C$ and the completion taken is a pro-$\C$ completion, for $\C$ any extension-closed class of finite groups.

We are unable to establish whether the number of ends of a group is fully a profinite property. However, we show via an example from the theory of parafree groups that having infinitely many ends is not a pro-$p$ property, or even an \emph{up-weak} one; hence any argument that it is a profinite property could not be made generic. That having more than one end passes via pro-$\C$ equivalence from an ambient group to a subgroup but not \emph{vice-versa} may be thought of as loosely analogous to the fact that a subgroup of a free group is free (whereas a supergroup of a free group need not be free).

\begin{thmA}\label{t: profinite ends}
If $\C$ is a variety of finite groups (\emph{i.e.} a class closed under taking subgroups, quotients, and extensions), then having more than one end is a down-weak pro-$\C$ property. However for any prime $p$, having more than one end is not an up-weak pro-$p$ property, even among finitely presented groups.
\end{thmA}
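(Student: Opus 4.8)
The plan is to treat the two assertions separately, using Stallings' structure theorem \cite{Stallings} together with profinite Bass--Serre theory \cite{RZ,profinitetrees} for the down-weak direction, and an explicit parafree group for the failure of the up-weak direction. For the down-weak statement, suppose $u\colon H\hookrightarrow G$ induces a surjection $\hat u\colon\hat H\twoheadrightarrow\hat G$ and that $G$ has more than one end. By Stallings' theorem $G$ splits nontrivially over a finite subgroup, so $G$ acts minimally, without inversion, and with finite edge stabilisers on a discrete tree $T$, fixing no vertex. First I would promote this to the profinite setting: since $\mathcal C$ is extension-closed and the edge groups are finite, the standard construction associates to $T$ a profinite $\hat G$-tree $\hat T$ on which $\hat G$ acts continuously with finite edge stabilisers and no global fixed point, the point being that for finite edge groups the pro-$\mathcal C$ completion of the splitting is again a profinite amalgam or HNN extension in the sense of \cite{RZ}.

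Next I would exploit surjectivity of $\hat u$. Composing $H\hookrightarrow G\hookrightarrow\hat G$ (recall that $G$, being residually $\mathcal C$, embeds in $\hat G$, and that this composite agrees with $H\to\hat H\xrightarrow{\hat u}\hat G$ by naturality) realises $H$ as a dense subgroup of $\hat G$: its image is dense since $H$ is dense in $\hat H$ and $\hat u$ is onto, so the closure $\overline H$ equals $\hat G$. Restricting the action gives an action of $H$ on $\hat T$. Each edge stabiliser $H_e=H\cap\hat G_e$ is finite, being contained in a conjugate of the finite edge group; and $H$ fixes no vertex, for a vertex stabiliser $\hat G_v$ is closed, so $H\subseteq\hat G_v$ would force $\hat G=\overline H\subseteq\hat G_v$, contradicting that $\hat G$ has no global fixed point. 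Thus $H$ is a finitely generated group acting on a profinite tree without a global fixed point and with finite edge stabilisers, and the profinite Bass--Serre machinery then yields a nontrivial splitting of $H$ over a finite subgroup, so that $H$ has more than one end by Stallings' theorem. I expect this last descent --- passing from a nontrivial action on the \emph{profinite} tree back to a genuine splitting of the discrete group $H$ --- to be the main obstacle, and the place where finite generation of $H$ and finiteness of the edge groups are essential.

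For the up-weak statement, fix a prime $p$ and take $G$ to be a one-relator parafree group of rank $2$ that is not free, for instance one of Baumslag's groups $G=\langle a,b,c\mid a=w\rangle$ in which the relator $w$ lies in the commutator subgroup and involves $a$ nontrivially. Such a $G$ is finitely presented, torsion-free, and residually torsion-free nilpotent, hence residually $p$, and by Baumslag's theorem $G/\gamma_n G\cong F/\gamma_n F$ for $F$ free of rank $2$. Let $H=\langle b,c\rangle\le G$. By the Freiheitssatz $H$ is free of rank $2$, so $u\colon H\hookrightarrow G$ is a monomorphism and $H$ has infinitely many ends. Since $a\in\gamma_2 G$, an inductive computation shows the images of $b,c$ generate each quotient $G/\gamma_n G$; thus $u$ induces surjections $F/\gamma_n F\to G/\gamma_n G$ between isomorphic finitely generated nilpotent groups, which are therefore isomorphisms, and passing to the limit $\hat u$ is an isomorphism of pro-$p$ completions.

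Finally I would check that $G$ itself has exactly one end. As a torsion-free one-relator group whose relator involves all three generators, $G$ admits no nontrivial free-product decomposition and no splitting over a finite subgroup, so by Stallings' theorem $G$ is one-ended. This exhibits a monomorphism of finitely presented residually-$p$ groups inducing a pro-$p$ isomorphism for which $H$ has more than one end while $G$ has one, so that having more than one end is not an up-weak pro-$p$ property. The only genuine subtlety here lies in the input that $G$ is truly parafree --- hence pro-$p$ equivalent to a free group --- yet freely indecomposable and one-ended, which is precisely what Baumslag's construction supplies.
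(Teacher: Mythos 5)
Your strategy for the down-weak half diverges from the paper's at one crucial point, and that is exactly where your proof breaks. After promoting the Stallings splitting to a splitting of $\hat G$ with standard pro-$\C$ tree $\hat T$, you restrict the $\hat G$-action on $\hat T$ to the dense subgroup $H$ and assert that ``the profinite Bass--Serre machinery'' converts a fixed-point-free $H$-action on $\hat T$ with finite edge stabilisers into a genuine splitting of the discrete group $H$. No such theorem is available in the framework you cite: Ribes--Zalesskii theory decomposes \emph{profinite} groups acting on profinite trees with finite quotient graph, and a profinite tree is not a simplicial tree --- it has no combinatorial path structure on which to run ordinary Bass--Serre theory --- so there is no ready-made descent from an action of an abstract dense subgroup on $\hat T$ to a graph-of-groups decomposition of that subgroup. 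You flag this step yourself as ``the main obstacle''; it is not a technicality to be expected to dissolve, it is precisely the missing content. The paper's proof (Lemma \ref{l: profinite splitting}) is designed to avoid ever needing such a statement: since $H\le G$, the group $H$ already acts on the \emph{discrete} Bass--Serre tree $T$ of the Stallings splitting, with finite edge stabilisers, and the profinite apparatus (Proposition \ref{p: profinite splitting}, which embeds $T$ into $\hat T$ using separability of the finite edge groups, together with density of $H$ in $\hat G$ coming from surjectivity of $\hat u$) is used \emph{only} to rule out a global fixed point: if $H$ stabilised a vertex of $T$, then by continuity its closure $\hat G$ would lie in the corresponding (closed, proper) vertex stabiliser of $\hat T$, which is absurd. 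After that, ordinary Bass--Serre theory applied to the action of $H$ on $T$ gives a decomposition of $H$ as a graph of groups with finite edge groups, a collapsing argument extracts a nontrivial one-edge splitting, and Stallings' theorem finishes. Re-routing your argument through the discrete tree in this way repairs the gap; as written, the key step has no proof.

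The up-weak half follows the paper (Proposition \ref{p: parafree}) almost verbatim --- Baumslag's parafree group, the free subgroup $\langle b,c\rangle$, the pro-$p$ isomorphism via Hopficity of finitely generated nilpotent groups --- but your one-endedness argument for $G$ rests on a false principle. It is not true that a torsion-free one-relator group whose (cyclically reduced) relator involves all the generators is freely indecomposable: the relator $[ab,c]=abcb^{-1}a^{-1}c^{-1}$ involves $a$, $b$ and $c$, yet $\langle a,b,c \mid [ab,c]\rangle \cong (\mathbb{Z}\times\mathbb{Z})*\mathbb{Z}$ is a nontrivial free product. The correct criterion (Shenitzer) is that the relator lie in no proper free factor of the free group, i.e.\ the condition must survive every automorphism, and you have not verified this for Baumslag's relator. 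The paper argues differently: $G$ is torsion-free, so a splitting over a finite subgroup is a free product decomposition; by Grushko, a nontrivial decomposition of the $3$-generated group $G$ would have the form (cyclic)$\,*\,$($2$-generated); since $2$-generator subgroups of parafree groups are free (Baumslag), $G$ would then be a free product of free groups, hence free, contradicting Baumslag's theorem that $G$ is not free. Substituting this Grushko argument (or checking Shenitzer's criterion directly) completes that half of your proof.
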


The Rips construction is an algorithmic method which takes a finitely presented group $Q$ as input and outputs a short exact sequence $1 \rightarrow N \rightarrow G \rightarrow Q \rightarrow 1$, where $N$ is finitely generated and $G$ is a hyperbolic small-cancellation group. More recent variations on the construction offer further control over $N$ or $G$. In particular, Wise produced a version in which $G$ is residually finite \cite{Wise1} (and Wilton and I showed that it is in fact conjugacy separable \cite{cotton2009conjugacy}). On the other hand, Belegradek and Osin \cite{BelOsin} exhibit a version of the Rips construction where $N$ may be taken to be a quotient of an arbitrary non-elementary hyperbolic group. We investigate whether these two versions may be reconciled. It seems that the methods used to prove residual finiteness have no hope of being applied to a variation which gives such broad control over $N$. This raises the question of whether there is a genuine obstruction to $G$ being residually finite.

We apply the Belegradek--Osin version, taking $N$ to have property (T), to a suitable pathological group $Q$, so as to exhibit a hyperbolic group $G$. Either $G$ is not residually finite, which would be interesting in its own right, or the inclusion $N \hookrightarrow G$ induces an isomorphism of profinite completions, but $N$ has properties (T) and (FA) whereas $G$ does not. Hence:

\begin{thmA}\label{t: unknown pathologies}
Either not every Gromov hyperbolic group is residually finite, or neither of the following are up-weak profinite properties: property (FA); property (T).
\end{thmA}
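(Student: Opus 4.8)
The plan is to construct a single hyperbolic group $G$ together with a distinguished finitely generated normal subgroup $N$, via the Belegradek--Osin variant of the Rips construction, and then argue that the logical dichotomy in the statement is forced by a rigidity of profinite completions. Concretely, I would first choose a ``pathological'' quotient group $Q$: one that is finitely presented, non-elementary hyperbolic, and crucially has \emph{trivial profinite completion} (or at least $\hat Q = 1$), so that $Q$ is an infinite group seen as trivial by all finite quotients. Feeding $Q$ into the Belegradek--Osin construction gives a short exact sequence $1 \rightarrow N \rightarrow G \rightarrow Q \rightarrow 1$ with $G$ Gromov hyperbolic and $N$ finitely generated; the freedom in their construction lets me additionally demand that $N$ be a quotient of a group with property (T), so that $N$ itself has property (T), and hence property (FA) by the standard implication. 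The point of arranging $\hat Q = 1$ is that applying the right-exactness of profinite completion to the sequence should make $\hat N \rightarrow \hat G$ surjective; one then needs the kernel to be trivial to upgrade surjectivity to an isomorphism.

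Next I would make the dichotomy explicit. Suppose, for contradiction, that property (FA) (resp.\ (T)) \emph{were} an up-weak profinite property and that $G$ \emph{were} residually finite. Then since $\hat u: \hat N \rightarrow \hat G$ is an isomorphism (this is where residual finiteness of $G$, together with $\hat Q = 1$, is used: it forces $N \hookrightarrow G$ to induce an isomorphism on completions), the up-weak profinite property would transfer the property from $N$ up to $G$. But $G$ surjects onto the infinite group $Q$, and $Q$ acts nontrivially on a tree (being non-elementary hyperbolic, it splits, or at least admits a nontrivial action), so $G$ does \emph{not} have property (FA), and a fortiori not (T). This contradiction shows that at least one of the two hypotheses must fail: either $G$ is not residually finite, or neither (FA) nor (T) is an up-weak profinite property. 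This is exactly the stated disjunction.

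The key input I must verify is that the inclusion $N \hookrightarrow G$ really does induce an \emph{isomorphism} $\hat N \xrightarrow{\ \sim\ } \hat G$ \emph{under the assumption} that $G$ is residually finite. Surjectivity comes from $\hat Q = 1$ and right-exactness. For injectivity one argues that the closure of $N$ in $\hat G$ has the correct index (namely index equal to $|\hat Q| = 1$) and that the induced topology on $N$ from $\hat G$ is its own full profinite topology; here residual finiteness of $G$ is precisely what guarantees that $G$ (and hence its finitely generated normal subgroup $N$) embeds in the completion and that no extra collapsing occurs. This profinite-exactness bookkeeping for short exact sequences of residually finite groups is where I expect to be most careful, since exactness of completion on the left is delicate and genuinely uses the hypothesis on $G$.

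The main obstacle is the construction and verification of the pathological quotient $Q$ with $\hat Q = 1$ while simultaneously keeping it non-elementary hyperbolic and finitely presented, and ensuring the Belegradek--Osin construction can be run with the property (T) condition imposed on $N$; reconciling all these constraints at once is the delicate part. Everything downstream---the transfer of (FA)/(T) up an isomorphism of completions, and the observation that $G \twoheadrightarrow Q$ destroys (FA)---is then comparatively formal, relying only on the elementary fact that a group with property (FA) cannot surject onto a group acting nontrivially on a tree.
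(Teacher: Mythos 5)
Your overall architecture is the same as the paper's: feed a pathological $Q$ into the Belegradek--Osin construction (Theorem~\ref{t: osin-rips}) with $N$ a quotient of a hyperbolic group with property (T), deduce that $G$ fails (FA) because it surjects onto $Q$, and derive the dichotomy from the fact that residual finiteness of $G$ would force $\hat{N}\rightarrow\hat{G}$ to be an isomorphism. But two of your key verifications fail. First, your choice of $Q$: you require $Q$ to be non-elementary hyperbolic \emph{and} to have trivial profinite completion. An infinite hyperbolic group with $\hat{Q}=1$ is in particular a hyperbolic group that is not residually finite, so the existence of your $Q$ is precisely the open problem constituting the first horn of the dichotomy; you cannot construct such a group, and assuming one exists makes the argument circular. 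It is also unnecessary: in Theorem~\ref{t: osin-rips} only $H$ (the group whose quotient is $N$) needs to be non-elementary hyperbolic, while $Q$ need only be finitely presented. Moreover, the reason you wanted hyperbolicity of $Q$ --- ``being non-elementary hyperbolic, it splits, or at least admits a nontrivial action'' on a tree --- is false: there exist one-ended non-elementary hyperbolic groups with property (T) (uniform lattices in $Sp(n,1)$, for instance), and these have (FA). The paper avoids both problems by taking $Q=P*P$, where $P$ is infinite, finitely presented, superperfect, and without finite quotients (such $P$ exist); the free product structure gives an action on the Bass--Serre tree without global fixed point, so $Q$ visibly fails (FA), and no hyperbolicity of $Q$ is required.

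Second, the profinite bookkeeping. You claim that surjectivity of $\hat{N}\rightarrow\hat{G}$ follows from $\hat{Q}=1$ (true), and that injectivity follows because residual finiteness of $G$ guarantees ``no extra collapsing occurs'' (not true). Injectivity of $\hat{N}\rightarrow\hat{G}$ is equivalent to the statement that the profinite topology of $G$ induces on $N$ its \emph{full} profinite topology, i.e.\ every finite-index subgroup of $N$ contains $N\cap K$ for some finite-index subgroup $K\leq G$. Residual finiteness of $G$ only gives that $N$ embeds into $\hat{G}$; it says nothing about finite quotients of $N$ extending to finite quotients of $G$, and in general they do not. The mechanism that produces such extensions is the Platonov--Tavgen/Bridson--Grunewald lemma (Proposition~\ref{p: profinite equivalence} in the paper), and it requires $Q$ to be \emph{superperfect}: one reduces to a central extension $1\rightarrow Z\rightarrow E\rightarrow Q\rightarrow 1$ with $Z$ finite, and the hypotheses $H_1(Q,\mathbb{Z})=H_2(Q,\mathbb{Z})=0$ are exactly what forces such an extension to split, yielding a finite quotient of $E$ injective on $Z$. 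Your proposal omits the $H_2$ hypothesis entirely, and with only $\hat{Q}=1$ the desired isomorphism is simply not available. To repair the proof, impose the paper's conditions on $Q$: take $Q=P*P$ with $P$ superperfect, finitely presented, and without finite quotients (note that $H_2(P*P)\cong H_2(P)\oplus H_2(P)$, so $Q$ is again superperfect), and then invoke Proposition~\ref{p: profinite equivalence} rather than an ad hoc claim about residual finiteness preventing collapse.
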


This work forms part of the author's doctoral thesis. He would like to thank his supervisor Martin Bridson for guidance and helpful conversations, and the EPSRC for providing support.

\section{Preliminaries}
\begin{definition}
Two finitely generated residually finite groups are said to be \emph{profinitely equivalent} if their profinite completions are isomorphic (\emph{a priori} as topological groups, but by a result of Nikolov and Segal \cite{Nikolov-Segal} this is the same as being abstractly isomorphic). A property of finitely generated residually finite groups is called a \emph{profinite property} if it can be detected in the profinite completion, \emph{i.e.} if a group $G$ has the property then so does every group profinitely equivalent to $G$.
\end{definition}

Observe that profinite equivalence is indeed an equivalence relation. Some authors talk about the equivalence class of $G$ in this sense (possibly restricted to groups in a given class $\mathcal{C}$). This is known as the \emph{genus} of $G$ (in $\mathcal{C}$).

In the cases we consider in this paper we will generally know not just that there is an abstract isomorphism between the profinite completions of two residually finite groups, $G$ and $H$, but that there is an explicit homomorphism $u:H \rightarrow G$ such that the induced map $\hat{u}:\hat{H}\rightarrow\hat{G}$ is an isomorphism of the profinite completions. Indeed in this case, since $H$ is residually finite it injects into its profinite completion, and hence $u$ is also an injection. It is not \emph{a priori} obvious that there is room for difference here: Grothendieck asked whether $u$ itself must be an isomorphism if $H$ and $G$ are finitely presented, but this question was answered in the negative in \cite{BridGrun}. We will also be interested in cases where $u$ is an injection and $\hat{u}$ is surjective (but not necessarily injective). We make two more definitions. 

\begin{definition}
A property $P$ of discrete groups is said to be an \emph{up-weak profinite property} if whenever there exists a map of residually finite groups $u:G \hookrightarrow H$ such that the induced map $\hat{u}:\hat{G}\rightarrow\hat{H}$ is an isomorphism of the profinite completions and $G$ has $P$, then $H$ must also have $P$.
\end{definition}

\begin{definition}
A property $P$ of discrete groups is said to be a \emph{down-weak profinite property} if whenever there exists a map of residually finite groups $u:H \hookrightarrow G$ such that the induced map $\hat{u}:\hat{H}\rightarrow\hat{G}$ is an isomorphism of the profinite completions and $G$ has $P$, then $H$ must also have $P$.
\end{definition}

The use of the word \emph{weak} reflects the fact that any profinite property automatically satisfies these weaker conditions. The further labels refer to whether the property is passed from the subgroup \emph{up} to the ambient group, or from the ambient group \emph{down} to the subgroup.

Any of these terms may also be used in the context where we care not about arbitrary finite quotients, but finite quotients in a certain class $\C$. In this case we replace the word `profinite' with `pro-$\C$' wherever it appears, so a pro-$\C$ equivalence means an isomorphism between pro-$\C$ completions, \emph{etc.} We will chiefly concern ourselves with the cases where $\C$ is the class of all finite groups or the class of finite $p$-groups (pro-$\C$ has special names in these cases: profinite and pro-$p$ respectively).

\subsection{Ends of groups and Stallings' Theorem}
The number of ends of a finitely generated group is an important geometric property, and the following theorem of Stallings \cite{Stallings} translates this into algebraic structure.

\begin{theorem}\label{t: Stallings}
A finitely generated group $G$ has more than one end if and only if it splits nontrivially as an amalgamated free product or HNN extension over a finite subgroup.
\end{theorem}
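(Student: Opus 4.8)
The plan is to prove the two implications separately, in each case translating between the geometric notion of ends and the combinatorial data of a group acting on a tree, with Bass--Serre theory (\emph{e.g.} \cite{Serre}) as the bridge to splittings. Throughout I would work with the characterisation of ends via \emph{almost invariant sets}: for a finitely generated $G$, a subset $A \subseteq G$ is almost invariant if the symmetric difference $A \triangle Ag$ is finite for every $g \in G$, and $A$ is proper if both $A$ and its complement $A^{*}$ are infinite. The foundational fact I would take as a starting point is that $G$ has more than one end if and only if it admits a proper almost invariant subset, equivalently $H^1(G,\mathbb{F}_2 G)\neq 0$; this follows from the long exact sequence attached to $0\to\mathbb{F}_2 G\to\mathbb{F}_2^{G}\to\mathbb{F}_2^{G}/\mathbb{F}_2 G\to 0$, using that $\mathbb{F}_2^{G}$ is coinduced and that $(\mathbb{F}_2^{G}/\mathbb{F}_2 G)^{G}$ has $\mathbb{F}_2$-dimension equal to the number of ends. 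This reformulation converts a statement about the coarse geometry of the Cayley graph into algebra that interacts well with the left-translation action.

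For the easier direction, suppose $G=A*_C B$ with $C$ finite and proper in each factor, or $G$ is an HNN extension with finite associated subgroup $C$. I would pass to the Bass--Serre tree $\T$, on which $G$ acts with finite edge stabilisers and no global fixed point. Deleting an edge $e$ separates $\T$ into two half-trees; fixing a basepoint vertex $v_0$ and setting $A=\{g\in G : gv_0 \text{ lies in the chosen half}\}$ produces a subset of $G$ whose coboundary in the Cayley graph is controlled by the finite stabiliser of $e$. This set is almost invariant, and it is proper exactly because the splitting is nontrivial, so $e(G)\geq 2$.

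The substantial direction is the converse, where I am handed a proper almost invariant set and must manufacture a $G$-tree with finite edge stabilisers and no fixed point; the engine is Dunwoody's structure tree. First I would arrange a proper almost invariant set $A$ whose $G$-translates form a \emph{nested} family, meaning that for any two translates one of the four Boolean corners is finite. From a nested, complementation-closed, $G$-invariant family one builds a partially ordered set with an order-reversing involution, and its structure graph is a tree $\T$ on which $G$ acts, with edges corresponding to the cuts themselves. Since each cut has finite coboundary and a nontrivial element of $G$ fixes no vertex of the Cayley graph, the stabiliser of a cut embeds into the symmetric group on its finite coboundary and is therefore finite; properness of $A$ guarantees the action has no global fixed point. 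Bass--Serre theory then converts this action into a nontrivial splitting of $G$ as an amalgam or HNN extension over a finite edge group, completing the proof.

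The main obstacle is producing the nested family. Translates $gA$ can \emph{cross} $A$, meaning all four corners $gA\cap A$, $gA\cap A^{*}$, $gA^{*}\cap A$, $gA^{*}\cap A^{*}$ are infinite, and nothing forces a single choice of $A$ to be uncrossed by its whole orbit. The resolution rests on the submodular inequality for coboundaries, $|\delta(A\cap B)|+|\delta(A\cup B)|\leq|\delta A|+|\delta B|$, together with a minimality argument: choosing $A$ with least possible coboundary forces the corners arising from a crossing to be almost invariant sets of controlled coboundary, and a finite combinatorial analysis then shows the crossings can be resolved, yielding a genuinely nested $G$-invariant family while preserving properness. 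Verifying that this process terminates, and that finiteness of coboundaries (hence of the eventual edge groups) is maintained at every stage, is where finite generation of $G$ is indispensable and where essentially all the depth of Stallings' theorem resides.
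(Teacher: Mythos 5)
The paper itself contains no proof of this statement: Theorem \ref{t: Stallings} is quoted as background and attributed to \cite{Stallings}, so there is no internal argument to compare yours against. Judged on its own terms, your outline follows the modern route through almost invariant sets and Dunwoody's structure tree, rather than Stallings' original argument via bipolar structures; both are legitimate, and yours is the standard contemporary treatment. Several of your ingredients are correct and correctly justified: the characterisation $e(G)>1 \Leftrightarrow H^{1}(G,\mathbb{F}_2 G)\neq 0$ via the coefficient sequence $0\to\mathbb{F}_2 G\to\mathbb{F}_2^{G}\to\mathbb{F}_2^{G}/\mathbb{F}_2 G\to 0$; the easy direction, where a half-tree of the Bass--Serre tree pulls back to a proper almost invariant subset of $G$; and the finiteness of cut stabilisers, which follows exactly as you say because $G$ acts freely on the vertices of its Cayley graph, so the stabiliser of a cut injects into the symmetric group of its finite, nonempty coboundary.

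The genuine gap is the step you yourself flag: producing a nested, $G$-invariant family of cuts. Writing that minimality of $|\delta A|$ plus the submodular inequality means ``a finite combinatorial analysis then shows the crossings can be resolved'' is a description of the difficulty, not an argument; it is precisely here that Stallings' theorem lives. The submodularity argument only tells you that proper corners of two crossing minimal cuts are again minimal cuts; it does not by itself produce an uncrossed cut, and the naive move of replacing $A$ by a corner can increase the number of translates crossed, so termination is not automatic. The actual proof (Dunwoody's) requires an interval-finiteness lemma --- that only finitely many minimal cuts lie between two given ones, resting on connectivity and local finiteness of the Cayley graph --- followed by a careful extremal argument showing that a minimal cut chosen to minimise the number of orbit translates crossing it is crossed by none. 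Likewise, ``properness of $A$ guarantees the action has no global fixed point'' needs a proof: one must show the resulting splitting is nontrivial, i.e.\ that $G$ stabilises no vertex of the structure tree, which again uses properness through the tree's construction rather than as an immediate consequence. As it stands, your proposal is a well-informed and accurately oriented plan, but the two assertions above assume the entire substance of the theorem rather than establishing it.
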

\subsection{Profinite Bass--Serre Theory}
We recall some basic definitions and facts from the theory of profinite groups acting on profinite trees, as developed by Ribes and Zalesskii. Details of this background material may be found in \cite{profinitetrees}, \cite{RZ}, or \cite{zalesskii_fundamental_1989}.

Recall that a \emph{profinite graph} $\Gamma$ is an inverse limit of finite graphs. It is \emph{connected} if each of its finite quotients is connected, and its \emph{profinite fundamental group} $\Pi_1(\Gamma)$ is the group of deck transformations of its profinite universal cover $\tilde{\Gamma}$ (satisfying an appropriate universal property among connected Galois covers of $\Gamma$). We say that $\Gamma$ is a (profinitely simply connected) \emph{profinite tree} if $\Pi_1(\Gamma) = 1$.

If a profinite group $G$ acts continuously on a profinite tree $T$ with \emph{finite} quotient $\Gamma$ then there is (analogously to the abstract case) a profinite graph of groups $\mathcal{G}$ over $\Gamma$ where the vertex and edge groups are isomorphic to the stabilizers of the corresponding vertices and edges in $T$; in this case $\Pi_1(\mathcal{G})$ is isomorphic to $G$. Similarly, associated to a finite\footnote{For technical reasons, this is not so straightforward when the underlying graph may not be finite.} graph of profinite groups is a profinite tree on which the fundamental group acts. As in the discrete case, each closed subgroup has a minimal invariant subtree (see Lemma 2.2 of \cite{RZ}).

The following proposition is a variation on a statement in \cite{RZ}, concerning the case where $\C$ is the class of all finite groups. This more general form must be known to the experts, but I have not found it in the literature.

\begin{prop}\label{p: profinite splitting}
Let $\C$ be an extension-closed class of finite groups. Let $\G$ be a finite graph of (discrete) groups over an underlying graph $\Gamma$, with finitely generated vertex groups, such that $G=\pi_{1}(\G)$ is residually-$\C$. Then the pro-$\C$ completion $\hat{G}$ of $G$ is isomorphic to the pro-$\C$ fundamental group $\Pi_{1}^{\C}(\hat{\G}_{\C})$ of the finite graph of pro-$\C$ groups $\hat{\G}_{\C}$ obtained from $\G$ by taking the completion of each edge and vertex group with respect to the topology induced by the pro-$\C$ topology of $G$.

There is a natural map from the standard tree $S$ associated to $\G$ to the standard pro-$\C$ tree $\hat{S}$ associated to $\hat{\G}_{\C}$. If the edge groups of $\G$ are separable in the vertex groups with respect to the topology induced by the pro-$\C$ topology on $G$, this map is an injection.
\end{prop}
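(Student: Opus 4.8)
The plan is to prove the three assertions in turn, deriving the isomorphism $\hat{G}\cong\Pi_1^\C(\hat{\G}_\C)$ from a comparison of universal properties and then reading off the tree statements. For the isomorphism I would first exhibit the natural comparison map: the inclusions $G_v\hookrightarrow G$ induce maps $\widehat{G_v}\to\hat{G}$ on completions, these are compatible with the edge identifications, and so the universal property of the pro-$\C$ fundamental group yields a continuous homomorphism $\Phi:\Pi_1^\C(\hat{\G}_\C)\to\hat{G}$. To see $\Phi$ is an isomorphism I would check that both groups represent the same functor on $\C$. For finite $K\in\C$, continuous homomorphisms $\hat{G}\to K$ are exactly homomorphisms $G=\pi_1(\G)\to K$, which by the abstract universal property of $\pi_1(\G)$ are exactly compatible families of maps $G_v\to K$ together with chosen images for the stable letters. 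The one point needing an argument is that each restriction $G_v\to K$ is automatically continuous for the topology induced on $G_v$ by the pro-$\C$ topology of $G$: its kernel is the intersection with $G_v$ of an open normal subgroup of $G$, hence open in the induced topology. Thus such families coincide with the compatible families of continuous maps $\widehat{G_v}\to K$ plus stable-letter data, i.e. with the continuous homomorphisms $\Pi_1^\C(\hat{\G}_\C)\to K$. These bijections are natural in $K$ and induced by $\Phi$, so $\Phi$ is an isomorphism of pro-$\C$ groups. The conceptual content is that completing each vertex and edge group in the \emph{induced} topology, rather than its own intrinsic pro-$\C$ topology, is precisely what makes the two universal properties agree.

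For the tree statements, recall that $S$ has vertex set $\coprod_v G/G_v$ and edge set $\coprod_e G/G_e$, while $\hat{S}$ has vertex set $\coprod_v \hat{G}/\overline{G_v}$ and edge set $\coprod_e \hat{G}/\overline{G_e}$, with $\overline{G_v},\overline{G_e}$ the closures in $\hat{G}$. I would define the map by $gG_v\mapsto \iota(g)\overline{G_v}$ and $gG_e\mapsto\iota(g)\overline{G_e}$, where $\iota:G\to\hat{G}$; well-definedness, compatibility with incidence, and $G$-equivariance are immediate, giving a morphism $S\to\hat{S}$ of graphs. Assuming now that the edge groups are separable in the vertex groups, I would first prove \emph{local} injectivity. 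The edges of $S$ incident to the base vertex $G_v$ and lying over a given $e$ are parametrised by $G_v/G_e$, and the map sends them into $\widehat{G_v}/\overline{G_e}$; separability of $G_e$ in $G_v$ says exactly that $G_v/G_e\hookrightarrow\widehat{G_v}/\overline{G_e}$ is injective. Edges over distinct $e$ land in distinct sheets of $\hat{S}$, so the map is injective on the star of $G_v$, and by equivariance on every star. Finally I would upgrade this to global injectivity using that $\hat{S}$ is a profinite tree: were two vertices of $S$ to share an image, the image of the reduced $S$-path joining them would, by local injectivity, be a nondegenerate reduced path of finite length in $\hat{S}$ with coinciding endpoints, i.e. a finite circuit, which a profinite tree does not contain; the same argument treats edges.

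The step I expect to be the main obstacle is this last local-to-global passage, since it takes place in the profinite tree $\hat{S}$ rather than a simplicial one. One must know that the standard object $\hat{S}$ really is a profinite tree, invoke the absence of finite reduced circuits in profinite trees, and verify that local injectivity genuinely forces the image of a reduced path to be reduced in the profinite sense; these are exactly the places where the profinite Bass--Serre machinery, rather than a naive simplicial argument, must be used. By contrast, setting up the induced topology in the first part correctly, so that the vertex- and edge-group completions in $\hat{\G}_\C$ are taken in the induced rather than the intrinsic topology, is the other point requiring care but is routine once stated.
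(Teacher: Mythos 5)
Your proposal is correct and follows essentially the same route as the paper: your first part is a more carefully stated (representable-functor) version of the paper's observation that, because the completions in $\hat{\G}_{\C}$ are taken in the topology induced from $G$, the groups $\hat{G}$ and $\Pi_1^{\C}(\hat{\G}_{\C})$ have exactly the same maps to groups in $\C$; and your tree argument --- coset-to-coset map, local injectivity from separability of edge groups in vertex groups, then global injectivity because a pro-$\C$ tree contains no finite loops --- is precisely the paper's proof.
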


\begin{proof}
The first part of the proposition simply rests on the observation that the finite quotients of $\Pi_{1}^{\C}(\hat{\G}_{\C})$ are precisely those of $G$. For in each case a finite quotient is generated by the images of the vertex groups and the fundamental group of the underlying graph. By construction the possible images (in such finite quotients) of the vertex groups from $\G$ and $\hat{\G}_{\C}$ coincide; likewise the finite quotients of the fundamental groups.

The standard tree $S$ is built as a set of right cosets of the edge and vertex groups of $\G$ in $G$. Similarly $\hat{S}$ as a set is the disjoint union of right cosets of the edge and vertex groups (of $\hat{\G}_{\C}$) in $\hat{G}$. The obvious map sending a coset $G_{\gamma}g$ to the coset $\hat{G}_{\gamma}g$ respects incidence maps and so is a map of graphs $S \rightarrow \hat{S}$. The natural requirement for this to be an injection is that the vertex and edge groups are separable in $G$ with respect to the pro-$\C$ topology on $G$, or in other words that the pro-$\C$ topology on $\G$ is efficient. 

Our assumption that the edge groups of $\G$ are separable in the vertex groups is \emph{a priori} weaker than the efficiency of $\G$. When the edge groups are separable in the vertex groups, however, the edges adjoining any vertex $v$ in $S$ have disjoint images in $\hat{S}$. So the map from $S$ to $\hat{S}$ is \emph{locally} injective. If it were not globally injective there would be some finite loops in the image. But $\hat{S}$ is a pro-$\C$ tree, and in particular has no finite loops; hence separability of the edge groups in the vertex groups is sufficient to give an injection of standard trees.
\end{proof}

\section{Ends of groups}\label{s: ends profinite property}
Let $\mathcal{C}$ be an extension-closed class of finite groups. We will show that having more than one end is a down-weak pro-$\mathcal{C}$ property. We refer the reader back to Proposition \ref{p: profinite splitting}, which lets us pass a splitting of a pro-$\mathcal{C}$ group to a splitting of its pro-$\mathcal{C}$ completion.

\begin{lemma}\label{l: profinite splitting}
Let $u:H \hookrightarrow G$ be a map of residually $\mathcal{C}$ groups such that the induced map $\hat{u}:\hat{H}\rightarrow\hat{G}$ is a surjection of the pro-$\mathcal{C}$ completions and $G$ splits nontrivially as the fundamental group of a finite graph of groups $\mathcal{G}$ over the underlying graph $\Gamma$ with edge groups separable in the vertex groups they are incident at (with the topology induced by the pro-$\mathcal{C}$ topology on $G$). Then $H$ also splits as a finite graph of groups.
\end{lemma}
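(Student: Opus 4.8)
The plan is to exploit the action of $\hat{H}$ on the standard pro-$\C$ tree coming from the splitting of $G$, and then to descend to a genuine action of $H$ on the discrete standard tree $S$. First I would apply Proposition \ref{p: profinite splitting} to the graph of groups $\G$: since $G=\pi_1(\G)$ is residually $\C$, its pro-$\C$ completion is $\Pi_1^{\C}(\hat{\G}_{\C})$ and therefore acts on the associated pro-$\C$ tree $\hat{S}$, while the separability of the edge groups in the vertex groups furnishes a natural $G$-equivariant injection $\iota\colon S\hookrightarrow\hat{S}$. The same separability keeps the completed edge groups separated inside the completed vertex groups, so the induced profinite splitting of $\hat{G}$ is again nontrivial; in particular $\hat{G}$ has no global fixed point on $\hat{S}$, and every vertex stabiliser $\hat{G}_{w}=\mathrm{Stab}_{\hat{G}}(w)$ is a proper closed subgroup. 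On the other side, because $\hat{u}\colon\hat{H}\to\hat{G}$ is a continuous surjection of profinite groups and $H$ is dense in $\hat{H}$, the image $u(H)$ is dense in $\hat{G}$, i.e. $\overline{u(H)}=\hat{G}$.

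The heart of the argument is to show that the induced action of $H$ on the discrete tree $S$ (via $u\colon H\hookrightarrow G$ and the $G$-action on $S$, which is without inversions) has no global fixed point. Suppose instead that $H$ fixes a vertex $v\in S$. Applying the $G$-equivariant map $\iota$, the subgroup $u(H)$ fixes $\iota(v)\in\hat{S}$, so $u(H)\subseteq\hat{G}_{\iota(v)}$. As this stabiliser is a proper closed subgroup, taking closures gives $\overline{u(H)}\subseteq\hat{G}_{\iota(v)}\subsetneq\hat{G}$, contradicting $\overline{u(H)}=\hat{G}$. Hence $H$ fixes no vertex of $S$.

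Finally I would invoke the Bass--Serre structure theory. Since $H$ is finitely generated and acts without inversions and without a global fixed point on the tree $S$, it possesses a minimal invariant subtree $T_{H}$ on which it acts cocompactly; the quotient graph of groups $H\backslash T_{H}$ is then a finite, nontrivial graph of groups with fundamental group $H$, which is the required splitting. The main obstacle is the transfer step in the middle paragraph: it is precisely the separability hypothesis that is doing the work, both in producing the equivariant map $\iota$ and---more essentially---in keeping the profinite vertex stabilisers proper, for without this the completed edge or vertex groups could swell to fill $\hat{G}$, collapsing the profinite splitting and destroying the density contradiction. A lesser technical point to handle with care is the cocompactness of the $H$-action on its minimal subtree, which is where finite generation of $H$ enters.
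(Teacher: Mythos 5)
Your argument is correct in its core, and that core is the same as the paper's: embed the Bass--Serre tree $S$ into the pro-$\mathcal{C}$ tree $\hat{S}$ via Proposition \ref{p: profinite splitting}, observe that $u(H)$ is dense in $\hat{G}$ because $\hat{u}$ is a surjection of compact groups, and deduce that $H$ fixes no vertex of $S$, since otherwise its closure $\overline{u(H)}=\hat{G}$ would lie in a proper closed vertex stabiliser of $\hat{S}$. One remark on your middle step: the properness of every vertex stabiliser of $\hat{S}$ does not follow formally from ``$\overline{G_e}\subsetneq\overline{G_v}$ for every edge''; the clean justification is that if $\overline{G_{v}}=\hat{G}$ for some vertex, then all cosets $G_{v}g$ with $g\in G$ would have the same image in $\hat{S}$, contradicting the injectivity of $S\hookrightarrow\hat{S}$ (which is exactly what separability buys) together with the nontriviality of the splitting of $G$. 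The paper is just as terse at this point (``absurd since $\overline{H}\backslash\hat{T}=\Gamma$''), so this is a shared, reparable looseness rather than an error.

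Where you genuinely diverge is the finishing step, and there is a hypothesis issue there. You pass to the minimal $H$-invariant subtree and invoke cocompactness of minimal actions of finitely generated groups; but finite generation of $H$ is \emph{not} among the hypotheses of the Lemma. The paper instead works with the possibly infinite quotient graph of groups $H\backslash T$ and runs a collapsing argument: either some edge can be chosen so that collapsing everything except that edge gives a nontrivial one-edge splitting (an HNN extension if the edge lies on a loop, an amalgam otherwise), or the search for such an edge never terminates, exhibiting $H$ as an ascending union of subgroups of conjugates of the finitely many edge groups of $\mathcal{G}$, which forces $H$ into a vertex stabiliser --- contradicting what was already established. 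That argument needs no finite generation, and it outputs directly a splitting over a single edge, the form fed into Stallings' theorem in Proposition \ref{p: ends profinite}. Your route is shorter and entirely standard, and it suffices for every application the paper makes (where all groups are finitely generated); but as a proof of the Lemma as stated you should either add finite generation of $H$ to the hypotheses or replace the minimal-subtree step by a collapsing argument of the above kind. (Note also that from your finite nontrivial graph of groups one still needs to collapse down to a single edge before quoting Stallings, though for a finite graph this is immediate.)
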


\begin{proof}
Let $T$ be the Bass--Serre tree associated with the splitting of $G$ given by $\mathcal{G}$. As $G$ is residually $\mathcal{C}$, by Proposition \ref{p: profinite splitting} the pro-$\mathcal{C}$ completion $\hat{G_{\mathcal{C}}}$ of $G$ can be recovered as the pro-$\mathcal{C}$ fundamental group of the completion $\hat{\mathcal{G}_{\mathcal{C}}}$, so the quotient $\hat{G}\backslash \hat{T}$ of the pro-$\mathcal{C}$ tree $\hat{T}$ is isomorphic to $\Gamma$. Moreover since the edge groups are separable in the vertex groups, $T$ embeds into the pro-$\mathcal{C}$ tree $\hat{T}$. As $H$ is a subgroup of $G$ it also acts on $T$, and we consider the quotient $H \backslash T$. $H$ cannot lie in the vertex stabilizer of any vertex in $T$, since then by continuity its closure $\bar{H}$ would also lie in the vertex stabilizer, which is absurd since $\bar{H}\backslash \hat{T} = \Gamma$. Hence $H$ splits nontrivially as a graph of groups. 

There is some edge in this graph of groups such that by collapsing everything except this one edge we get a different nontrivial splitting of $H$, as the fundamental group of a graph of groups with a single edge. The existence of such an edge may be seen by an argument by contradiction.

Firstly, if the graph of groups is not a tree of groups, then by choosing an edge in a loop and collapsing the connected components at either end, we get a decomposition of $H$ as an HNN extension, which is necessarily non-trivial. If we have a tree of groups, begin by choosing an edge. If collapsing the connected components at either end we reduce to a trivial splitting, then $H$ is contained in one of the vertex groups of this induced splitting. Now we choose an edge in the corresponding component and repeat. There are \emph{a priori} two possibilities: this process will terminate with a non-trivial splitting for $H$ over a single edge; or it does not terminate, and (by collapsing all except the sequence of chosen edges) we get $H$ as the ascending union of a ray of groups. But in this latter case $H$ is inside the ascending union of the edge groups; each edge group is a subgroup of one of the finitely many edge groups of $\G$, so $H$ lives inside one of the edge groups, and hence one of the vertex groups, of $\G$; a contradiction.
\end{proof}

\begin{prop}\label{p: ends profinite}
Whenever $u:H \hookrightarrow G$ is a map of residually $\mathcal{C}$ groups such that the induced map $\hat{u}:\hat{H}\rightarrow\hat{G}$ is a surjection of the pro-$\mathcal{C}$ completions and $G$ has more than one end, $H$ also has more than one end.
\end{prop}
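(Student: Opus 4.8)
The plan is to reduce the claim directly to Stallings' theorem (Theorem~\ref{t: Stallings}) via the splitting machinery established in Lemma~\ref{l: profinite splitting}. Since $G$ has more than one end, Stallings' theorem tells us that $G$ splits nontrivially as an amalgamated free product or HNN extension over a \emph{finite} subgroup; equivalently, $G=\pi_1(\mathcal{G})$ for a finite graph of groups $\mathcal{G}$ with a single edge whose edge group is finite. The strategy is then to apply Lemma~\ref{l: profinite splitting} to this splitting to obtain a nontrivial splitting of $H$ over a finite subgroup, and to invoke Stallings' theorem in the reverse direction to conclude that $H$ has more than one end.

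The one genuine step that requires care is verifying the hypothesis of Lemma~\ref{l: profinite splitting}: namely that the edge groups of $\mathcal{G}$ are separable in the vertex groups they are incident at, with respect to the topology induced by the pro-$\mathcal{C}$ topology on $G$. Here is where the finiteness of the edge group from Stallings' theorem does the essential work. Since $\mathcal{C}$ is extension-closed, any finite subgroup $E$ of a residually $\mathcal{C}$ group lies in $\mathcal{C}$ (being an extension built from trivial pieces, or more simply a finite subgroup which must itself be residually $\mathcal{C}$ and hence in $\mathcal{C}$), and a finite subgroup is automatically closed in the pro-$\mathcal{C}$ topology. Thus $E$ is separable in any vertex group containing it: its complement is a union of cosets, each of which is closed, and since $E$ is finite one checks that $E$ itself is open-and-closed in the induced topology. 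I would argue that a finite subset of a residually $\mathcal{C}$ group is always separable, since separating finitely many points from $E$ suffices and each can be separated using a single finite quotient in $\mathcal{C}$.

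With the separability hypothesis confirmed, Lemma~\ref{l: profinite splitting} applies verbatim and yields that $H$ splits nontrivially as a finite graph of groups; moreover the final paragraph of that lemma's proof produces a splitting of $H$ as the fundamental group of a graph of groups with a single edge. I would then note that this single edge group of $H$ embeds into (a conjugate of) the finite edge group of $\mathcal{G}$, and is therefore itself finite. Hence $H$ splits nontrivially over a finite subgroup, as either an amalgamated free product or an HNN extension.

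Applying Stallings' theorem (Theorem~\ref{t: Stallings}) once more, this time to $H$, we conclude that $H$ has more than one end, completing the proof. The main conceptual obstacle is not in these end-invocations of Stallings but in ensuring the edge-group separability that licenses the use of Lemma~\ref{l: profinite splitting}; since that lemma was phrased to take separability as a hypothesis, the crux here is the observation that splitting over a \emph{finite} subgroup makes the hypothesis automatic regardless of the class $\mathcal{C}$, so no additional subgroup-separability assumptions on $G$ are needed.
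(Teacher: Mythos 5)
Your proposal is correct and takes essentially the same approach as the paper: Stallings' theorem gives a splitting of $G$ over a finite subgroup, residual $\mathcal{C}$-ness makes that finite edge group separable (closed) in the pro-$\mathcal{C}$ topology so that Lemma \ref{l: profinite splitting} applies, the resulting splitting of $H$ inherits finite edge groups (they are intersections of $H$ with conjugates of the finite edge group), and Stallings' theorem applied in reverse finishes the argument. One small caution: your parenthetical claim that a finite subgroup of a residually $\mathcal{C}$ group lies in $\mathcal{C}$ needs subgroup-closure of $\mathcal{C}$, not just extension-closure, but it is also unnecessary---separability of the finite edge group, which you (and the paper) establish directly from residual $\mathcal{C}$-ness, is all that is required.
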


\begin{proof}
Now, by Stallings' Theorem (Theorem \ref{t: Stallings}), any group with more than one end splits as a graph of groups with a single edge and finite edge stabilizer. If a group $G$ is residually-$\mathcal{C}$ and has more than one end, then in this splitting, the vertex group or groups are also residually-$\mathcal{C}$ (as subgroups of $G$). Being residually-$\mathcal{C}$ means that the trivial subgroup, and hence any finite subgroup, is separable. Thus the edge group is separable in the vertex group(s) in this splitting and Lemma \ref{l: profinite splitting} gives an embedding of the tree $T$ associated with the splitting of $G$ into the pro-$\mathcal{C}$ tree $\hat{T}$ associated with the splitting of $\hat{G}$. Since the splitting of $G$ had finite edge stabilizers, this property carries through directly to the constructed splitting of $H$. Hence by Stallings' Theorem again $H$ has more than one end. 

\end{proof}

\begin{cor}\label{c: ends profinite}Having more than one end is a down-weak pro-$\mathcal{C}$ property. Equivalently, having one end is an up-weak profinite property. 
\end{cor}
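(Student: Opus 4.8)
The plan is to derive the corollary directly from Proposition~\ref{p: ends profinite}, which does the substantive work. First I would note that the corollary packages two separate claims: that ``more than one end'' is a down-weak pro-$\C$ property, and the equivalent contrapositive reformulation about ``one end'' being an up-weak profinite property. The first claim is essentially immediate. Recall that a down-weak pro-$\C$ property is one that passes from an ambient group $G$ to a subgroup $H$ whenever $u:H\hookrightarrow G$ induces an \emph{isomorphism} $\hat{u}:\hat{H}\to\hat{G}$ of pro-$\C$ completions. But an isomorphism is in particular a surjection, so Proposition~\ref{p: ends profinite} applies verbatim: if $G$ has more than one end then $H$ does too. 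Thus having more than one end satisfies the defining condition for a down-weak pro-$\C$ property, and the first sentence is proved.

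For the equivalence with the second statement, I would argue by contraposition. Having one end and having more than one end are not quite complementary among all finitely generated groups (there are also the $0$-ended and $2$-ended cases), so I must be slightly careful about exactly which groups are in play. The cleanest route is to take $\C$ to be the class of all finite groups, so that pro-$\C$ means profinite, and to observe that the down-weak statement for $G$ and the up-weak statement for $H$ are formulated over the \emph{same} class of maps, namely monomorphisms $u:H\hookrightarrow G$ inducing an isomorphism $\hat{u}:\hat{H}\xrightarrow{\sim}\hat{G}$. Given such a map, Proposition~\ref{p: ends profinite} (with surjectivity supplied by the isomorphism) shows that ``$G$ has more than one end'' implies ``$H$ has more than one end.'' Taking the contrapositive of this single implication yields: ``$H$ has one end'' implies ``$G$ has one end'' -- which is precisely the assertion that having one end is an up-weak profinite property, with the roles of subgroup and ambient group correctly matched to the definition.

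The only subtlety, and the step I expect to require the most care, is checking that the contrapositive genuinely lands on ``exactly one end'' rather than ``one or fewer ends.'' Since $\hat{u}$ is an isomorphism, $H$ and $G$ have the same profinite completion; in particular one is finite if and only if the other is, so the $0$-ended case is shared and causes no trouble. If $H$ has one end then in particular $H$ is infinite, hence $G$ is infinite, and the implication ``not more than one end for $H$ forces not more than one end for $G$'' combined with $G$ infinite pins $G$ down to exactly one or exactly two ends; but a two-ended group is virtually $\Z$ and its profinite completion detects this, forcing $H$ to be two-ended as well, contrary to hypothesis. I would phrase this cleanly by simply asserting the logical equivalence of the two defining implications over the fixed class of isomorphism-inducing inclusions, leaving the harmless bookkeeping about finite and two-ended groups as a remark, since the genuine content is entirely contained in Proposition~\ref{p: ends profinite}.
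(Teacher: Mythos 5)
Your proposal is correct and takes essentially the same route as the paper, whose entire proof is the one-line observation that the corollary is Proposition~\ref{p: ends profinite} specialised to the case where $\hat{u}$ is an isomorphism; your first paragraph is exactly this, and the contrapositive bookkeeping in the rest is sound. One small remark: your worry about the two-ended case is vacuous, since a two-ended group has \emph{more} than one end, so the contrapositive ``$H$ has at most one end $\Rightarrow$ $G$ has at most one end'' together with $G$ infinite already pins $G$ to exactly one end, and no appeal to profinite detection of virtual $\mathbb{Z}$ is needed.
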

\begin{proof}
This is just Proposition \ref{p: ends profinite} specialised to the case where $\hat{u}$ is an isomorphism.
\end{proof}

\begin{remark}
It is not in general the case that $H \backslash T$ recovers $\Gamma$ in the proof of Lemma \ref{l: profinite splitting}; indeed this occurs only when $H$ is of finite index in $G$, and since $\hat{u}$ is a surjection then $H$ can be of finite index only when $H=G$.
\end{remark}

Having two ends is the same as being virtually $\mathbb{Z}$. This is not hard to detect algebraically, and is in fact a pro-$\C$ property.

\begin{prop}
%For $\C$ an extension-closed class of finite groups, having 2 ends is a pro-$\C$ property.
%Can I rescue this more general form of the statement and also shorten the proof in this case? Not sure
Let $\C$ be a variety of finite groups. Then having precisely 2 ends is a pro-$\C$ property.
\end{prop}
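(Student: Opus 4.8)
The plan is to characterise ``exactly two ends'' by a property of the topological group $\hat{G}_\C$ alone, so that it is automatically preserved under pro-$\C$ equivalence. Concretely, I would prove that a finitely generated residually-$\C$ group $G$ has exactly two ends if and only if $\hat{G}_\C$ is infinite and \emph{virtually procyclic}, in the sense that it has an open procyclic subgroup. Since this condition refers only to the isomorphism type of $\hat{G}_\C$ as a profinite group, establishing the equivalence immediately gives that having precisely two ends is a pro-$\C$ property, refining the one-ended detection of Corollary \ref{c: ends profinite}. Throughout I would use the standard fact that, for finitely generated groups, having exactly two ends is the same as being infinite and virtually infinite cyclic (which also follows from Stallings' Theorem \ref{t: Stallings} together with the structure of two-ended groups).

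For the forward implication, suppose $G$ is two-ended and choose a finite-index subgroup $K \cong \Z$. As $G$ is residually-$\C$ it embeds in $\hat{G}_\C$, so the closure $\overline{K}$ is a closed subgroup of index at most $[G:K]$, hence of finite index and therefore open. A generator of $K$ has infinite order in $\hat{G}_\C$ and topologically generates $\overline{K}$, so $\overline{K}$ is an infinite procyclic open subgroup; thus $\hat{G}_\C$ is infinite and virtually procyclic, as required.

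For the converse, suppose $U \leq \hat{G}_\C$ is an open procyclic subgroup; since $\hat{G}_\C$ is infinite and $U$ has finite index, $U$ is infinite. Put $K = G \cap U$, a finite-index subgroup of $G$ contained in the abelian group $U$, so $K$ is finitely generated and abelian. The key step is to identify the pro-$\C$ topology on $K$ induced from $G$ with the full pro-$\C$ topology of $K$: because $U$ is open in the pro-$\C$ group $\hat{G}_\C$ it contains the kernel of a map onto some $Q \in \C$, so $K$ is open in the pro-$\C$ topology of $G$, and here I would invoke the standard fact that the closure $\overline{K} = U$ is then the full completion $\hat{K}_\C$. Writing $K \cong \Z^{r} \times T$ with $T$ finite, we obtain $\hat{K}_\C \cong \hat{\Z}_\C^{\,r} \times \hat{T}_\C$; since this is procyclic and infinite, comparing finite cyclic quotients (using that $\hat{\Z}_\C$ infinite forces $\Z/p \in \C$ for some prime $p$) rules out $r \geq 2$ and $r = 0$, giving $r = 1$, and then $\hat{T}_\C = 1$, whence $T = 1$ by residual-$\C$-ness. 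Thus $K \cong \Z$, so $G$ is infinite and virtually $\Z$, hence two-ended.

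I expect the main obstacle to be exactly this matching of topologies $\overline{K} \cong \hat{K}_\C$ on the finite-index subgroup: it is the same separability/efficiency phenomenon underlying Proposition \ref{p: profinite splitting}, and it is where the hypothesis that $\C$ is a variety (so in particular extension-closed) is essential rather than cosmetic. By contrast, the remaining rank-and-torsion bookkeeping in the converse, and the entire forward direction, are routine.
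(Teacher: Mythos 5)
Your proposal is correct, but it takes a genuinely different route from the paper's proof. Both arguments actually start in the same place: inside the common pro-$\C$ completion one takes an open procyclic subgroup (your $U$; in the paper, the closure of the infinite cyclic finite-index subgroup $H_0'$ of the two-ended group $H$) and intersects it with the other group to get a finitely generated abelian subgroup of finite index. The divergence is in how one then rules out rank $\neq 1$. You identify $\overline{K}=U$ with the full completion $\hat{K}_{\C}$, which rests on the standard fact that for extension-closed $\C$ the pro-$\C$ topology of $G$ induces on a pro-$\C$-open subgroup its \emph{full} pro-$\C$ topology; you correctly flag this as the crux and as the place where the variety hypothesis is used, and the fact is indeed standard (it is in Ribes--Zalesskii's treatment of pro-$\C$ topologies), after which the computation $\hat{K}_{\C}\cong\hat{\mathbb{Z}}_{\C}^{\,r}\times\hat{T}_{\C}$ forces $r=1$ and $T=1$. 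The paper never matches topologies on subgroups. It uses only that the two groups share the same finite $\C$-quotients: every finite quotient of the two-ended $H$ contains a cyclic subgroup of index at most $k=[\hat{H}:\overline{H_0'}]$, whereas if the abelian subgroup $G_0\leq G$ had rank $n\geq 2$, an explicit construction (take $G_0^N$ for $N>k^3$ with $C_N\in\C$, pass to its normal closure $L$, and count) produces a finite $\C$-quotient $G/L$ with no cyclic subgroup of that index, a contradiction. Your route buys a cleaner, intrinsic characterization --- a finitely generated residually $\C$ group is two-ended if and only if its pro-$\C$ completion is infinite and virtually procyclic --- which makes invariance under pro-$\C$ equivalence immediate and is reusable elsewhere; the paper's route is more self-contained and elementary, trading your topology-matching lemma for a quantitative counting argument, with the variety hypothesis entering only to supply cyclic groups $C_N\in\C$ of arbitrarily large order and to compare quotient sets. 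One small point to tighten in your write-up: to conclude $T=1$ you should note that $T$, being finite, abelian and residually $\C$, lies in $\C$, so a prime $q$ dividing $|T|$ yields a non-cyclic quotient $(\mathbb{Z}/q)^2$ of $K$ in $\C$, contradicting procyclicity of $U$; as you say, this is routine.
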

\begin{proof}
%Let $H$ have 2 ends, and $G$ be pro-$\C$-ly equivalent to $H$. Since $H$ has two ends, it is virtually $\mathbb{Z}$, having a cyclic subgroup $H_0$ of finite index $k$. In particular it is virtually abelian, and the closure of $H_0$ in the pro-$\C$ completion $\hat{H}$ is finite index and abelian. Since $G$ is residually $\C$ it injects into its pro-$\C$ completion, so $G$ is also virtually abelian.

%Let H be a subgro

%Let $H$ have 2 ends, and $G$ be profinitely equivalent to $H$. Since $H$ has two ends, it is virtually $\mathbb{Z}$, having a cyclic subgroup $H_0$ of finite index $k$. In particular it is virtually abelian, and the closure of $H_0$ in the profinite completion $\hat{H}$ is finite index and abelian. Since $G$ is residually finite it injects into its profinite completion, so $G$ is also virtually abelian.

Let $H$ have 2 ends, and $G$ be pro-$\C$ equivalent to $H$. Since $H$ has two ends, it is virtually $\mathbb{Z}$, having a cyclic subgroup $H_0'$ of finite index; replacing $H_0'$ if necessary with its normal core we may assume this is normal in $H$. In particular $H$ is virtually abelian, and the closure $\bar{H_0'}$ of $H_0$ in the pro-$\C$ completion $\hat{H}$ is finite index $k$, normal, and abelian. Let $H_0 = \bar{H_0'}\cap H$, and $H_0 = \bar{H_0'} \cap G$ (since $G$ is residually $\C$ it injects into its pro-$\C$ completion $\hat{G}$, which coincides with $\hat{H}$, so this makes sense). So $G$ is also virtually abelian, and the index of $G_0$ in $G$ equals the index of $H_0$ in $H$ equals $k$.

% IF we take H_0' to be CHARACTERISTIC, does same hold for its closure (HENCE H_0, G_0) ?

As $G_0$ is a finitely generated abelian group, it is virtually $\mathbb{Z}^n$, for some $n$. If $n = 1$, $G$ is 2-ended and we are done. If $n > 1$, for a contradiction we will construct a finite quotient of $G$ which does not have a cyclic subgroup of index $\leq k$, which thus cannot be a quotient of $H$.

Say $G_0$ is $d$-generated and of rank $n$. Let $N$ be an integer much greater than $k$, such that the cyclic group $C_N \in \C$ (since $\C$ is subgroup closed, there is some $p$ such that $C_p \in C$, and since it is extension closed it follows that $C_{p^m} \in \C \forall m \in \mathbb{N}$, so such $N$ exists). Now $G_0^N$ is of index at least $N^n$ in $G_0$. It may not be the case that $G_0^N$ is normal in $G$, but it is certainly normal in $G_0$, and the quotient $G_0/G_0^N$ is a quotient of $C_N^d$ which surjects onto $C_N^n$. We let $L$ be the normal closure of $G_0^N$ in $G$. Since $G_0$ is of index $k$ in $G$, $L$ consists of the union of at most $k$ conjugates of $G_0^N$, so the index $[L:G_0^N]\leq k$. Since $G_0/(L \cap G_0)$ is a quotient of $G_0/G_0^N$, no cyclic subgroup has order greater than $N$. So $G/L$ has order at least $(N^n)/k$, and no cyclic subgroup of order greater than $Nk$. So long as $n \geq 2$ and $N > k^3$, it follows that $G/L$ has no cyclic subgroup of order $\leq k$, which is a contradiction.
\end{proof}

At least in the general case, however, we cannot hope for the stronger statement that the number of ends is a profinite property or even a weak profinite property, as the following example shows.

\begin{prop}
\label{p: parafree}
Having infinitely many ends is \emph{not} an up-weak pro-$p$ property for any prime $p$.
\end{prop}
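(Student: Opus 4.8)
\emph{Proof proposal.} The plan is to exhibit an inclusion $u:G\hookrightarrow H$ of residually-$p$ groups, inducing an isomorphism $\hat u:\hat G\to\hat H$ of pro-$p$ completions, where $G$ has infinitely many ends but $H$ has only one; this is exactly a counterexample to ``infinitely many ends'' being up-weak pro-$p$. For $H$ I would take a parafree group of rank $2$ that is \emph{not} free, in the sense of Baumslag --- that is, a residually nilpotent group with $H/\gamma_k H\cong F_2/\gamma_k F_2$ for all $k$ --- for concreteness a one-relator example such as $H=\langle a,b,c\mid a=[c,a][c,b]\rangle$. Two facts about such an $H$ are standard. First, since its lower central quotients are those of $F_2$, they are free abelian, so $H$ is residually torsion-free-nilpotent and hence residually-$p$ for every prime $p$; moreover every finite $p$-quotient of $H$ factors through a nilpotent quotient, so $H/\gamma_k H\cong F_2/\gamma_k F_2$ forces $\hat H^{(p)}\cong \hat F_2^{(p)}$, the free pro-$p$ group of rank $2$. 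Second, $H$ is one-ended: it is infinite, finitely generated, torsion-free and not infinite cyclic, and as a non-free parafree group it is freely indecomposable (a free factor of a parafree group is again parafree, so a nontrivial free splitting would, by Grushko, make $H$ free), whence Stallings' Theorem \ref{t: Stallings} gives exactly one end.

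For $G$ I would use the Freiheitssatz. Since the defining relator $a^{-1}[c,a][c,b]$ is cyclically reduced and genuinely involves $a$, the subgroup $G:=\langle b,c\rangle\le H$ is free of rank $2$, and so $G\cong F_2$ has infinitely many ends. It remains to check that the inclusion $u:G\hookrightarrow H$ induces an isomorphism on pro-$p$ completions. In the abelianization $H^{\mathrm{ab}}\cong\mathbb{Z}^2$ the relation reads $a=0$, so the images of $b$ and $c$ generate $H^{\mathrm{ab}}$, hence generate the Frattini quotient $\hat H^{(p)}/\Phi(\hat H^{(p)})\cong(\mathbb{Z}/p)^2$; by the pro-$p$ Nakayama/Frattini argument $b$ and $c$ therefore topologically generate $\hat H^{(p)}$, so $\hat u:\hat G^{(p)}\to\hat H^{(p)}$ is surjective. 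Both source and target are free pro-$p$ groups of rank $2$, and finitely generated free pro-$p$ groups are Hopfian, so this surjection is in fact an isomorphism. This yields the desired $u:G\hookrightarrow H$ with $\hat u$ an isomorphism, $G$ infinitely-ended and $H$ one-ended; as the construction works verbatim for every prime $p$, the proposition follows.

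The main obstacle is not the homological bookkeeping but securing a single example in which all the required properties hold \emph{simultaneously}: one must invoke the existence of a parafree group that is genuinely non-free (so as to be one-ended rather than free), confirm that it is residually-$p$ with pro-$p$ completion free pro-$p$ of the correct rank, and verify that it is freely indecomposable. Once a concrete Baumslag example is fixed, the Freiheitssatz supplies the dense free subgroup for free and Hopficity of finitely generated free pro-$p$ groups upgrades surjectivity to an isomorphism, so the delicate point is really the input from the theory of parafree groups rather than anything in the profinite argument itself.
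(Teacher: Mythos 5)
Your overall route is the same as the paper's: the same Baumslag group $\langle a,b,c\mid a=[c,a][c,b]\rangle$, the same free subgroup $\langle b,c\rangle$, and the same three claims (residually $p$ and parafree; the inclusion induces a pro-$p$ isomorphism; the ambient group is one-ended). Your treatment of the pro-$p$ isomorphism (Frattini argument for surjectivity, then Hopficity of finitely generated pro-$p$ groups) and your use of the Freiheitssatz for freeness of $\langle b,c\rangle$ are fine, and if anything more self-contained than the paper's appeal to Baumslag.

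However, your one-endedness step has a genuine gap. The parenthetical ``a free factor of a parafree group is again parafree, so a nontrivial free splitting would, by Grushko, make $H$ free'' does not work as written, for two reasons. First, Grushko concerns the minimal number of generators, and you appear to conflate this with the \emph{parafree} rank: $H$ has parafree rank $2$ but requires $3$ generators (indeed, if $H$ were $2$-generated it would be free by Baumslag's theorem that $2$-generator subgroups of parafree groups are free, and the whole example would collapse). So Grushko gives factors of ranks $1$ and $2$, not $1$ and $1$. Second, even granting that free factors of parafree groups are parafree --- a true but nontrivial claim you neither prove nor reference --- ``parafree'' does not imply ``free'': $H$ itself is the counterexample, so concluding that parafree factors make $H$ free is a non sequitur. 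The missing ingredient, which is exactly what the paper supplies, is Baumslag's cited theorem that $2$-generator subgroups of parafree groups are free: after using torsion-freeness (one-relator group) to turn ``more than one end'' into a nontrivial free product, Grushko forces the factors to be a cyclic group and a $2$-generator group, the cited theorem makes both factors free, and then $H$ would be free, a contradiction. Your argument can be repaired along these lines (or by proving rank-additivity of parafreeness across free products), but some such ingredient must be invoked explicitly.
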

\begin{proof}
This is an observation based on the theory of parafree groups. Parafree groups are residually nilpotent groups having the same nilpotent quotients as a free group. It is of interest that non-free parafree groups exist. We consider the group $G = \<a, b, c | a=[c, a][c, b]\>$, which was shown by Baumslag to be residually $p$ and (non-free) parafree \cite{baumslag1969groups}. He shows that $b$ and $c$ generate a free subgroup, and that their images generate any nilpotent quotient of $G$ (and hence in particular any finite $p$-quotient). Hence the injection $\<b,c\> \hookrightarrow G$ induces an isomorphism of pro-$p$ completions. $G$ is one ended; if not, it would split over a finite subgroup. But $G$, as a one-relator group, is torsion-free. Hence we would have $G$ as a free product. Grushko's theorem states that the rank of a free product is equal to the sum of the ranks of the free factors, hence in this case $G$ would be the free product of a cyclic group and a $2$-generator group. But $2$-generator subgroups of parafree groups are free \cite{baumslag1969groups}, so $G$ would be a free product of free groups, thus free, which is a contradiction.

\end{proof}

\newtheorem*{thm: profinite ends}{\emph{\textbf{Theorem \ref{t: profinite ends}}}}
\begin{thm: profinite ends}
\emph{If $\C$ is any class of finite groups closed under taking subgroups, quotients and extensions, then having more than one end is a down-weak pro-$\C$ property. However for any prime $p$ having more than one end is not an up-weak pro-$p$ property, even among finitely presented groups.}
\end{thm: profinite ends}
\begin{proof}
Combine the conclusions of Corollary \ref{c: ends profinite} and Proposition \ref{p: parafree}.
\end{proof}

\section{Producing further pathologies}\label{s: pathology machine}
There is a technique due to Rips which allows pathologies of an arbitrary finitely presented group to be translated to new pathologies of a group over which we have more control. The original Rips construction is as follows \cite{Rips}.

\begin{thm}\label{t: Rips construction}
For every $\lambda \geq 6$ there exists an algorithm that, given a finite presentation $\langle S | R \rangle$ for a finitely presented group $Q$ as input, outputs a short exact sequence $1\rightarrow N\rightarrow \Gamma\rightarrow Q \rightarrow 1$ such that:
\begin{enumerate}
\item $N$ is 2-generated; and
\item $\Gamma$ is a $C'(1/\lambda)$ small cancellation group and hence hyperbolic.
\end{enumerate}
\end{thm}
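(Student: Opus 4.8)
The plan is to realise $\Gamma$ as an explicit augmentation of the given presentation of $Q$ by two new generators $a_1,a_2$, which will generate $N$. Writing $S=\{x_1,\dots,x_n\}$ and $R=\{r_1,\dots,r_m\}$, I would define
$$\Gamma=\langle\, x_1,\dots,x_n,a_1,a_2 \mid r_j=U_j\ (1\le j\le m),\ x_ia_kx_i^{-1}=W_{ik},\ x_i^{-1}a_kx_i=V_{ik}\ (1\le i\le n,\ k=1,2)\,\rangle,$$
where each right-hand side $U_j,W_{ik},V_{ik}$ is a positive word in $a_1,a_2$ to be chosen below. The map $\Gamma\to Q$ fixing the $x_i$ and killing $a_1,a_2$ is the intended quotient, and $N$ is its kernel.

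The two structural conclusions are then immediate. Setting $a_1=a_2=1$ turns every $U_j,W_{ik},V_{ik}$ into the empty word, so the defining relations collapse to exactly $r_1=\dots=r_m=1$ together with trivial identities; hence $\Gamma/\langle\!\langle a_1,a_2\rangle\!\rangle\cong Q$ and the sequence $1\to N\to\Gamma\to Q\to1$ is exact. For the kernel, the conjugation relations put $x_i^{\pm1}a_kx_i^{\mp1}$ inside $\langle a_1,a_2\rangle$, so the subgroup $\langle a_1,a_2\rangle$ is invariant under conjugation by every generator and is therefore normal; being normal and containing $a_1,a_2$ it contains their normal closure $N$, while it is obviously contained in $N$, so $\langle a_1,a_2\rangle=N$. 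Thus $N$ is $2$-generated, giving (1).

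The real content, and the step I expect to be the main obstacle, is the choice of the positive words $U_j,W_{ik},V_{ik}$ making the presentation $C'(1/\lambda)$. Read cyclically, each defining relator is a concatenation (short word in the $x_i^{\pm1}$) $\cdot$ (long word in $a_1,a_2$), the first factor having length bounded by a constant depending only on the input. A piece is a maximal common subword of two distinct relators, or of a relator and a rotation of its inverse; pieces meeting the short $x$-part are harmless once relators are long, so the only danger is long common $a$-subwords. I would therefore draw the right-hand sides from a fixed family of long positive words with small mutual and self overlap --- for example letting the $s$-th word be a product of syllables $a_1a_2^{e_{s,1}}a_1a_2^{e_{s,2}}\cdots a_1a_2^{e_{s,M}}$ whose exponent patterns are strictly increasing and so spread out across the finitely many words that any two of them, and any word against a rotation of another or of an inverse, agree on at most a bounded number of syllables. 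Taking the common length $D$ of these words large (as a function of $\lambda$ and the input) then forces every piece to have length $<D/\lambda\le|r|/\lambda$ for each relator $r$ containing it, which is exactly $C'(1/\lambda)$. The care needed here is to verify \emph{simultaneously} that no relator is a proper power, that no relator shares a long subword with another, and that the bounded overlaps survive crossing the $x$/$a$ boundary; all of this follows from keeping the syllable patterns distinct and the words long, but must be quantified cleanly in terms of $\lambda$.

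Finally, because $\lambda\ge6$ we have $C'(1/\lambda)\subseteq C'(1/6)$, and $C'(1/6)$ presentations satisfy a linear isoperimetric inequality (via Dehn's algorithm and Greendlinger's lemma), so $\Gamma$ is word-hyperbolic, giving (2). The whole procedure is effective: from $\langle S\mid R\rangle$ and $\lambda$ one writes down the finitely many right-hand-side words by the fixed recipe and outputs the augmented presentation, so it is an algorithm as required.
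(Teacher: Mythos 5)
This theorem is quoted in the paper from Rips's original article \cite{Rips} without proof, and your construction is exactly the standard one from that source: adjoin two generators $a_1,a_2$, replace each relator $r_j$ by $r_j=U_j$, add the conjugation relations $x_i^{\pm1}a_kx_i^{\mp1}=W_{ik},V_{ik}$ with long positive words whose globally distinct exponent patterns bound all pieces, so that $N=\langle a_1,a_2\rangle$ is normal and $2$-generated and the presentation is $C'(1/\lambda)$. Your sketch is correct and takes essentially the same approach as the cited proof.
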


Since the original version several variations have been produced. These mostly vary the properties which $\Gamma$ is constrained to have, and sometimes demand more than two generators for $N$ (but still a finite number); they may also give extra control over $N$.

Wise exhibited a version in which $G$ is seen as a 1-acylindrical HNN extension of a free group, and used this characterisation to show that $G$ is residually finite \cite{Wise1}; Wilton and I subsequently showed that such groups are also conjugacy separable \cite{cotton2009conjugacy}.

It is a result due to Belegradek and Osin \cite{BelOsin} that there is a version of the Rips construction where the left group may be taken to be a quotient of an arbitrary non-elementary hyperbolic group:

\begin{theorem}\label{t: osin-rips}
There exists an algorithm which, given as input a finitely presented group $Q$ and a non-elementary hyperbolic group $H$, will output a short exact sequence \begin{equation}1 \rightarrow N \rightarrow \Gamma \rightarrow Q \rightarrow 1, \end{equation} where $\Gamma$ is finitely presented and hyperbolic, and $N$ is a quotient of $H$.
\end{theorem}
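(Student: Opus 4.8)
The plan is to build $\Gamma$ as a small-cancellation quotient of an auxiliary hyperbolic group, in direct parallel with the classical Rips construction (Theorem~\ref{t: Rips construction}) but with the two-generator subgroup $N$ there replaced by a quotient of $H$. Write $Q = \langle s_1, \dots, s_n \mid R_1, \dots, R_m\rangle$, let $F = F(s_1,\dots,s_n)$ be the free group on these generators, and set $G_0 = H * F$. A free product of finitely many hyperbolic groups is hyperbolic, so $G_0$ is hyperbolic, and it is finitely presented since $H$ (being hyperbolic) and $F$ both are. The obvious surjection $G_0 \to Q$ killing $H$ already realises $Q$ as a quotient; the work is to add finitely many relations that force $H$ to become normal and that push each $R_j$ into $H$, all without losing hyperbolicity.

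Concretely, fix a generating set $h_1,\dots,h_r$ of $H$ and impose two families of relations: conjugation relations $s_i^{\pm1} h_k s_i^{\mp1} = w_{i,k}^{\pm}$ with each $w_{i,k}^{\pm} \in H$, and relator-killing relations $R_j = u_j$ with each $u_j \in H$. Let $\Gamma = G_0 / \langle\langle \mathcal{R}\rangle\rangle$ where $\mathcal{R}$ is the resulting set of relators. The conjugation relations make the image $N$ of $H$ in $\Gamma$ invariant under each $s_i^{\pm1}$, hence normal (as $\Gamma$ is generated by $N$ together with the $s_i$); quotienting by $N$ then sends every $u_j$ to $1$ and returns precisely $Q$. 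Moreover the composite $H \hookrightarrow G_0 \twoheadrightarrow \Gamma$ surjects onto $N$, so $N$ is a quotient of $H$, and $\Gamma$ is finitely presented because only finitely many relators were added. The entire point is to choose the words $w_{i,k}^{\pm}$ and $u_j$ so that, read as words in $G_0$, the symmetrised closure of $\mathcal{R}$ satisfies a small-cancellation condition relative to the hyperbolic structure of $G_0$.

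To conclude hyperbolicity of $\Gamma$ I would invoke the small-cancellation theory over hyperbolic groups, in the form of Ol'shanskii's residualing homomorphisms: if $\mathcal{R}$ meets the relevant $C(\varepsilon,\mu,\rho)$ condition, then $G_0 / \langle\langle\mathcal{R}\rangle\rangle$ is hyperbolic and the quotient map is injective on balls of bounded radius. This last property is essential for the bookkeeping above to be valid rather than vacuous: it guarantees that $N$ is not accidentally collapsed and that $\Gamma/N$ is genuinely $Q$ and not a proper quotient of it. Algorithmicity follows because $H$ is hyperbolic, so sufficiently long and sufficiently generic words in $H$ can be produced effectively, and $Q$ is given by a finite presentation.

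The main obstacle is exactly the verification of the small-cancellation hypothesis over $G_0$. One must select the $w_{i,k}^{\pm}$ and $u_j$ --- for instance as long products of high powers of a family of pairwise non-commensurable infinite-order elements of $H$, which exist because $H$ is non-elementary --- so that any long common subword (``piece'') shared between two distinct translates of relators is forced to be short compared with the relators themselves. Bounding these pieces is a genuinely geometric task requiring control of geodesics and quasigeodesics in $G_0$, and it is here, rather than in the formal computation of the short exact sequence, that the real difficulty resides.
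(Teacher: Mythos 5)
The first thing to note is that the paper contains no proof of this theorem at all: it is stated as a quotation of a result of Belegradek and Osin \cite{BelOsin}, and the citation is the proof. Measured against the argument in that source, your outline reconstructs the correct strategy: Belegradek and Osin also work in $G_0 = H * F(s_1,\dots,s_n)$, force the image of $H$ to be normal by pushing each $s_i^{\pm 1} h_k s_i^{\mp 1}$ into it, push each relator $R_j$ into it, and recover $Q$ as the quotient by the image of $H$. Your formal bookkeeping is also right, with one correction: exactness and the isomorphism $\Gamma/N \cong Q$ are purely presentation-theoretic (killing $N$ trivialises the conjugation relations and turns $R_j = u_j$ into $R_j = 1$), so they do not depend on injectivity on balls, contrary to your remark. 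Non-collapse is only needed to keep $N$ non-trivial, which the statement as given does not even demand.

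The genuine gap is the one you flag yourself: hyperbolicity of $\Gamma$. Since normality of $N$, exactness, ``$N$ is a quotient of $H$'', and finite presentability are all formal, hyperbolicity is not a deferrable technicality --- it is the entire mathematical content of the theorem, and a proof that stops short of it is a plan rather than a proof. Moreover, the way this step is actually closed in the literature is not by hand-verifying a $C(\varepsilon,\mu,\rho)$ condition for cleverly chosen words $w_{i,k}^{\pm}$ and $u_j$: Belegradek and Osin instead invoke Ol'shanskii's 1993 theorem on residualing homomorphisms and suitable subgroups of hyperbolic groups, which says (roughly) that if $G_0$ is non-elementary hyperbolic, $S \leq G_0$ is a suitable non-elementary subgroup, and $T \subset G_0$ is any finite subset, then there is a hyperbolic quotient of $G_0$ in which the image of $T$ lies in the image of $S$, with good control on the kernel. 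Applying this with $S$ the factor $H$ (suitability being where non-elementarity of $H$ enters) and $T = \{s_i^{\pm 1} h_k s_i^{\mp 1}\} \cup \{R_j\}$ produces $\Gamma$ in one stroke. That theorem is exactly the packaged form of the small-cancellation geometry you describe as ``the real difficulty''; you should either cite it or accept that reproving it is the bulk of the work. The same applies to algorithmicity, which requires the choices in Ol'shanskii's construction to be effective, not merely that long generic words in $H$ can be listed.
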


One might hope to combine this with the version discussed above in which the central output group is residually finite and conjugacy separable, in search of further pathological groups. However, the approach to proving residual finiteness is based upon expressing the group as the fundamental group of a graph of groups, with the normal subgroup $N$ necessarily not lying in a vertex group and thus having an action without fixed points on a tree. In Osin's construction, letting $H$ be a group with property (FA), which is inherited under quotients, it is clear that the two versions may not easily be reconciled.

If there is a genuine obstruction here, this would be very interesting, since careful application could give us explicit examples of hyperbolic groups which are not residually finite. On the other hand, if it could be made residually finite this would allow us to construct more examples demonstrating further properties not to be profinite. Of course if any of these properties \emph{were} known to be profinite, that would provide us with an obstruction to residual finiteness.

Haglund and Wise have described a further residually finite variant of the Rips construction \cite{Haglund-Wise}, which is known by recent work of Martino and Minasyan to be conjugacy separable \cite{Martino-Minasyan}. This suffers similar obstructions to combination with Osin's version: in this case $\Gamma$ is built as the fundamental group of a compact non-positively curved 2-complex, and acts freely on the universal cover $X$ of this complex, which is a CAT(0) space and hence contractible \cite{BridHaef}. By choosing $H$ with an appropriate fixed point property, we can see that the central output group from Osin's version of the Rips construction could never have such a structure. For instance for every $d > 0$ one can construct a non-elementary hyperbolic group which has no action without a global fixed point on any contractible space of dimension $\leq d$ \cite{arzhantseva2007infinite}. Taking $H$ to be such a group for $d=2$ suffices here, for if the two versions were reconciled then the free action of $\Gamma$ on the 2-complex $X$ would induce a free action of $N$ on $X$, and hence an action of $H$ on the $X$ without global fixed point, which is a contradiction.

\subsection{Direct profinite equivalence in short exact sequences of groups}
If a group $\Gamma$ surjects onto anything with no finite quotients, then all of the finite quotients of $\Gamma$ must be surjective images of the kernel. If moreover the kernel is finitely generated and the target group is superperfect (\emph{i.e.} having trivial homology in dimensions $1$ and $2$, with $\mathbb{Z}$ coefficients), this is a profinite equivalence:

\begin{prop}\label{p: profinite equivalence}
Let $1\rightarrow N\rightarrow \Gamma\rightarrow Q \rightarrow 1$ be a short exact sequence of groups such that $N$ is finitely generated, $\Gamma$ is finitely generated and residually finite, and $Q$ is superperfect and has no finite quotients. Then the injection of $N$ into $\Gamma$ induces a strong profinite equivalence.
\end{prop}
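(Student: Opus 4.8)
The aim is to show that the inclusion $u\colon N\hookrightarrow\Gamma$ induces an \emph{isomorphism} $\hat u\colon\hat N\to\hat\Gamma$ of profinite completions (this is the content of ``strong profinite equivalence''; note both groups are residually finite, as $\Gamma$ is and $N\le\Gamma$, so the maps to the completions are injective and the statement is meaningful). Surjectivity of $\hat u$ is immediate from the hypothesis on $Q$: any finite quotient $\Gamma\to F$ sends $N$ to a normal subgroup whose quotient is a finite quotient of $Q=\Gamma/N$, hence trivial, so $N$ already surjects onto $F$. Thus the image of $N$ is dense, and since $\hat N$ is compact the image is all of $\hat\Gamma$.

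The real content is injectivity, which is equivalent to saying that the full profinite topology of $N$ is induced from $\Gamma$, i.e. that every finite-index normal $M\trianglelefteq N$ contains $N\cap L$ for some finite-index normal $L\trianglelefteq\Gamma$. Since $N$ is finitely generated I may shrink $M$ to a characteristic subgroup, so that $M\trianglelefteq\Gamma$; writing $A=N/M$ and $E=\Gamma/M$ gives a short exact sequence $1\to A\to E\to Q\to 1$ with $A$ finite, and the statement I must prove becomes: $A$ injects into $\hat E$, equivalently $A$ meets the finite residual of $E$ trivially (pulling back a finite quotient of $E$ that separates $A$ from the identity then supplies the desired $L$).

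I would establish $A\cap R=1$ (where $R$ denotes the finite residual of $E$) by induction on $|A|$, peeling off a minimal $E$-normal subgroup $A_1\le A$ and passing to $E/A_1$: its kernel $A/A_1$ again lies over $Q$, so the inductive hypothesis applies and reduces the problem to separating $A_1$ itself. Such an $A_1$ is characteristically simple. If $A_1$ is non-abelian it has trivial centre, so conjugation embeds it into the finite group $\mathrm{Aut}(A_1)$ and $E\to E/C_E(A_1)$ is a finite quotient on which $A_1$ survives. If $A_1$ is elementary abelian, the conjugation action $E\to\mathrm{Aut}(A_1)$ has finite image; since $\hat Q=1$ every finite quotient of $E$ is a quotient of $A$, so $E=A\,C_E(A_1)$ and the action is governed by $A$ rather than by $Q$. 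This forces two cases: either $A_1$ is non-central, and I separate it by descending to the smaller extension $1\to A\cap C_E(A_1)\to C_E(A_1)\to Q\to 1$ (still over $Q$) and applying induction; or $A_1$ is central, and the problem becomes that of a central extension of $Q$ by the finite module $A_1$, where superperfectness enters decisively: $H_1(Q;\Z)=H_2(Q;\Z)=0$ give $\mathrm{Hom}(Q,A_1)=0$ and $H^2(Q;A_1)=0$, so the relevant extension splits and yields a finite quotient of $E$ detecting $A_1$.

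The main obstacle is exactly this abelian central case, and organising the induction so that $Q$ (rather than some uncontrolled quotient) always remains the base of the extension. I expect to arrange the latter by only ever quotienting by subgroups of $A$, keeping the quotient group equal to $Q$; and to handle the former by a Künneth argument that reduces the extension class to the groups $H^2(Q;A_1)$ and $\mathrm{Hom}(Q,A_1)$, both of which vanish by superperfectness. This is the Platonov--Tavgen mechanism as used by Bridson and Grunewald \cite{BridGrun}; the finite generation of $N$ and the residual finiteness of $\Gamma$ enter only to legitimise the reduction to finite $A$ and the passage to completions.
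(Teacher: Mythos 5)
Your argument is essentially correct, but note that the paper itself gives no proof: it simply cites Lemma 5.2 of Bridson--Grunewald \cite{BridGrun}, so what you have done is reconstruct the proof of the cited lemma (the Platonov--Tavgen mechanism, as you say). Your reconstruction is sound: surjectivity is exactly as you state; the reduction to a finite kernel $A$ via a characteristic finite-index subgroup of the finitely generated $N$ is the right use of the hypotheses; and each of your cases does close up. However, your induction on $|A|$ with minimal normal subgroups is heavier than necessary, and it obscures the fact that a single application of your ``central case'' mechanism finishes everything. The standard argument runs: let $C=C_E(A)$ be the centralizer of the \emph{whole} kernel; it is normal in $E$ (centralizers of normal subgroups are normal) and of finite index (since $\mathrm{Aut}(A)$ is finite); because $Q$ has no proper finite-index subgroups, $C$ maps onto $Q$, so $E=AC$ and $1\to Z(A)\to C\to Q\to 1$ is exact with $Z(A)=A\cap C$ central in $C$; superperfectness gives $H^2\bigl(Q;Z(A)\bigr)\cong \mathrm{Ext}\bigl(H_1(Q;\mathbb{Z}),Z(A)\bigr)\oplus\mathrm{Hom}\bigl(H_2(Q;\mathbb{Z}),Z(A)\bigr)=0$, so this central extension splits as a direct product $C=Z(A)\times Q_0$, and $Q_0=[C,C]$ (by perfectness of $Q$) is characteristic in $C$, hence normal and of finite index in $E$, and meets $A$ trivially. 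This one step subsumes your non-abelian case, your non-central abelian case, and the induction. Finally, one genuine vagueness in your sketch: in the central case you never identify which group realizes ``the central extension of $Q$ by $A_1$.'' The candidate that completes your version is the preimage $K\le E$ of the finite-index normal subgroup of $E/A_1$ supplied by your inductive hypothesis: by minimality of $A_1$ the intersection $K\cap A$ is $1$ (done) or $A_1$, the image of $K$ in $Q$ is all of $Q$, and $A_1$ is central in $K$, so $K$ is precisely such an extension; splitting it as above and passing to the normal core in $E$ of the complement finishes your argument. With that detail supplied, your proof is complete, just longer than it needs to be.
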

\begin{proof}
This is Lemma 5.2 in \cite{BridGrun}.
%The idea is as follows. We need to show that for every finite index subgroup $L$ of $N$ we can find a finite index subgroup $L'$ of $\Gamma$ such that $L' \cap N$ is contained in $L$. The fact that $Q$ is superperfect is used to deduce that every central extension of $Q$ splits. We build a central extension of $Q$ by a finite group from our original short exact sequence, and $L'$ will be taken to be the kernel of the splitting map. There are three steps in producing the central extension: first replace $L$ if necessary with its normal core in $\Gamma$ (since $L$ is finite index in $N$ which is normal and $\Gamma$ and $\Gamma$ is finitely generated, this will be finite index in $L$); secondly intersect the terms at the left and middle of the short exact sequence with the kernel $M$ of the action $\Gamma \rightarrow \rm{Aut}(N/L)$ (which is finite index, so still surjects onto $Q$); thirdly quotient by $L$ in the same two places.
\end{proof}

%\begin{theorem}
%Either not every Gromov hyperbolic group is residually finite, or neither of the following are up-weak profinite properties: property (FA); property (T).
%\end{theorem}
We will use this source of strong profinite equivalences by inputting an appropriate group $Q$ to the version of the Rips construction in Theorem \ref{t: osin-rips}. Either the central group is residually finite, in which case we have a profinite equivalence and some properties not carried by it, or the central group is not residually finite, which would be interesting in its own right.
\newtheorem*{thm: unknown pathologies}{\emph{\textbf{Theorem \ref{t: unknown pathologies}}}}
\begin{thm: unknown pathologies}
\emph{Either not every Gromov hyperbolic group is residually finite, or neither of the following are up-weak profinite properties: property (FA); property (T).}
\end{thm: unknown pathologies}

Unfortunately I am thus far unable to say which of these alternatives is correct.

\begin{proof}
We let $P$ be an infinite, superperfect, finitely presented group with no finite quotients; such groups exist \cite{Bridson}. Now we take $Q$ = $P*P$; $Q$ is also an infinite, superperfect, finitely presented group with no finite quotients, and additionally it acts on a tree without global fixed point. We apply Theorem \ref{t: osin-rips} with this $Q$ as the input group and having chosen $H$ to be a hyperbolic group with property (T). This gives us an output short exact sequence $1 \rightarrow N \overset{u}{\rightarrow} G \rightarrow Q \rightarrow 1$ in which $G$ is hyperbolic and $N$ is a quotient of $H$, hence finitely generated and having (T) (since the latter is a fixed point property and so preserved under quotients). Note that (T) implies (FA) and that (FA) is inherited under quotients, so as $Q$ does not have (FA), neither can $G$.

We assume for a contradiction that $G$ is residually finite and that one of properties (FA) and (T) is up-weakly profinite. Since $G$ is residually finite then so is $N$ (as a subgroup), and so by Proposition \ref{p: profinite equivalence} we would have $\hat{u}$ an isomorphism of profinite completions. By the assumption, either (T) passes from $N$ to $G$, which thus has (FA), or (FA) passes from $N$ to $G$. In either case we conclude that $G$ has (FA), which is a contradiction; thus one of our assumptions was false.
\end{proof}

\begin{remark}Everything in the construction here is algorithmic (see \cite{Bridson-WiltonProfiniteTriviality}). Thus by choosing particular finite presentations for such groups $P$ and $H$ (which we can do!) we could produce a completely explicit finite presentation for the hyperbolic group $G$ which would be pathological in one of the above manners.
\end{remark}

Of course one can insert any property $X$ in place of (T) or (FA) if it satisfies a small number of conditions: $X$ must be inherited under quotients (this makes fixed point properties obvious candidates); there must exist a non-elementary hyperbolic group $H$ having $X$; there must exist a superperfect finitely presented group $Q$ having neither $X$ nor any nontrivial finite quotients. In order to prove that $G$ is a hyperbolic group which is not residually finite it would also suffice to show the (slightly weaker) statement that $N$ having property (T) would induce property (FA) in $G$.

Various results are in some sense close to showing that $G$ that is not residually finite. Properties (T) and ($\tau$) are not profinite \cite{Aka},\cite{Kassabov}, but ($\tau$) \emph{is} up-weakly profinite.\footnote{Property ($\tau$) says that among unitary representations which factor through finite quotients, the trivial representation is isolated, but adding elements to a generating set can only further isolate the trivial representation, so it is clear that it is up-weakly profinite.} We cannot use ($\tau$) in the above argument since we need $Q$ to be without finite quotients, so it automatically has ($\tau$); I cannot say whether (T) is up-weakly profinite. 

Having one end is also an up-weak profinite property, but it is not preserved under quotients. The proof that it is up-weak, Lemma \ref{l: profinite splitting}, almost suffices to show that property (FA) is also an up-weak profinite property: if the larger group acts on a tree $T$ then it induces an action on the profinite tree $\hat{T}$ associated with the profinite completion of the graph of groups; so long as this action is nontrivial then by continuity the subgroup will act on $T$ so as not to stabilise any vertex. As we showed there, it is sufficient for this that the edge groups be separable in the vertex groups. In fact the only case in which it fails is if the underlying graph of the graph of groups is simply connected and each edge group is dense in the vertex groups at which it is incident (all with respect to the profinite topology on $G$). In this case, the profinite graph of groups is simply labelled with $\hat{G}$ everywhere, and $\hat{T}$ is nothing but the underlying graph of the graph of groups, with the trivial action. Unfortunately this is precisely the situation we have in the above construction.

\section{Further questions}
We highlight two open questions. In the light of the results of Section \ref{s: ends profinite property} showing that having more than one end is a down-weak pro-$\C$ property for extension-closed $\C$ but not an up-weak pro-$p$ property, it is natural to ask if we can say anything stronger when $\C$ is the class of all finite groups.

\begin{question}
Is having more than one end an up-weak profinite property? Is the number of ends of a group a profinite property?
\end{question}

Having one end corresponds to having no action on a tree with finite edge stabilizers without fixed point. A natural generalisation is property (FA), simply having no action on a tree without fixed point. We ask whether the results of Section \ref{s: ends profinite property} can be extended to this case:

\begin{question}
Is having property (FA) an up-weak profinite property?
\end{question}

In view of the results of Section \ref{s: pathology machine} a positive answer to this question would be extremely interesting, as it would imply the existence of hyperbolic groups which are not residually finite.

\bibliographystyle{gillow}
\bibliography{refs}        %use a bibtex bibliography file refs.bib

\end{document}